\definecolor{uuuuuu}{rgb}{0.26666666666666666,0.26666666666666666,0.26666666666666666}
\newtheorem{theorem}{Theorem}[section]
\newtheorem{lemma}[theorem]{Lemma}
\newtheorem{claim}[theorem]{Claim}
\title{The $m$-bipartite Ramsey number of the $K_{2,2}$ versus $K_{6,6}$}
\author[1]{Yaser Rowshan   }
\keywords{Ramsey numbers, Bipartite Ramsey numbers, complete graphs, m-bipartite Ramsey
	number.}
\subjclass[2010]{05D10, 05C55.}
\email{y.rowshan@iasbs.ac.ir,~~~y.rowshan.math@gmail.com}
\begin{document}
	\maketitle
	
 	\begin{abstract} Given bipartite graphs $G_1, \ldots, G_n$, the bipartite Ramsey number $BR(G_1,\ldots, G_n)$ is the last integer $b$ such
 		that any complete bipartite graph $K_{b,b}$ with edges coloured with colours $1,2,\ldots,n$ contains a copy of some $G_i$ ($1\leq i\leq n$) where all edges of $G_i$ have colour $i$.   As another view of bipartite Ramsey numbers, for given bipartite graphs $G_1, \ldots, G_n$ and a positive integer $m$, the $m$-bipartite Ramsey number $BR_m(G_1, \ldots, G_n)$, is defined as the least integer $b$, such that any complete bipartite graph $K_{m,b}$ with edges coloured with colours $1,2,\ldots,n$ contains a copy of some $G_i$ ($1\leq i\leq n$) where all edges of $G_i$ have colour $i$. The size of $BR_m(G_1, G_2)$, where $G_1=K_{2,2}$ and  $G_2\in \{K_{3,3}, K_{4,4}\}$ for each $m$ and  the size of   $BR_m(K_{3,3}, K_{3,3})$ and $BR_m(K_{2,2}, K_{5,5})$ for special values of $m$, have been determined in several article up to now. In this article,  we compute the size  of  $BR_m(K_{2,2}, K_{6,6})$ for some $m\geq 2$.
	\end{abstract}
	
	\section{Introduction}
Extremal graph theory problems generally  ask for the max/ min order or size of a graph having certain characteristics. Such problems are often quite natural in the construction of networks or circuits. Ramsey theory explores the question of how big a structure must be to contain a certain substructure or substructures. The  Ramsey number $R(G, H)$ is the smallest order of a complete graph, so that any $2$-coloring of the edges must result in either a copy of graph $G$ in the first color or a copy of graph $H$ in the second color.  It is shown that  $R(G, H)\leq R(K_m, K_n)$, where $G$ and $H$ be two arbitrary graph of size $m$ and $n$, respectively.

 Bipartite Ramsey problems deal with the same questions but the graph explored is the complete bipartite graph instead of the complete graph. Given bipartite graphs $G_1, \ldots, G_n$, the bipartite Ramsey number $BR(G_1,\ldots, G_n)$ is the last integer $b$ such that any complete bipartite graph $K_{b,b}$ with edges coloured with colours $1,2,\ldots,n$ contains a copy of some $G_i$ ($1\leq i\leq n$) where all edges of $G_i$ have colour $i$. One can refer to \cite{goedgebeur2022new, hattingh1998star, bucic2019multicolour, bucic20193, lakshmi2020three}, \cite{wang2021bipartite, hatala2021new, math10050701,  goddard2000bipartite} and their references for further studies.
 
 As new view of bipartite Ramsey numbers, for given bipartite graphs $G_1, \ldots, G_n$ and a positive integer $m$, the $m$-bipartite Ramsey number $BR_m(G_1, \ldots, G_n)$, is defined as the least integer $b$, such that any complete bipartite graph $K_{m,b}$ with edges coloured with colours $1,2,\ldots,n$ contains a copy of some $G_i$ ($1\leq i\leq n$) where all edges of $G_i$ have colour $i$. The size of $BR_m(G_1, G_2)$ where $G_1\in \{K_{2,2}, K_{3,3}\}$ and  $G_2\in \{K_{3,3}, K_{4,4}, K_{5,5}\}$, have been determined in several papers up to now. One can refer to \cite{bi2018another, 2022arXiv220112844R, chartrand2021new, bi2019new, rowshan2022m1, Rowshan2022TheR} and their references for further studies. In particular, The following results have been obtained on $m$-bipartite Ramsey numbers.
	\begin{theorem}\cite{bi2018another, 2022arXiv220112844R}
Let $m\geq 2 $, then:
	\[
	BR_m(K_{2,2},K_{3,3})= \left\lbrace
	\begin{array}{ll}	
		\text{does not exist}, & ~~~where~~~m=2,3,\vspace{.2 cm}\\
		15& ~~~where~~~m=4,\vspace{.2 cm}\\
		12& ~~~where~~~m=5,6,\vspace{.2 cm}\\
		9 & ~~~where~~~ m=7,8.\vspace{.2 cm}\\
		
	\end{array}
	\right.
	\]	
\end{theorem}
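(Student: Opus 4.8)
The plan is to translate the colouring problem into a covering/design condition and then split the work: explicit balanced constructions for the lower bounds and non-existence, and a double-counting argument (sharpened by a structural analysis where counting alone is too weak) for the upper bounds. First I would fix the $m$ vertices of the small side as $[m]=\{1,\dots,m\}$ and encode a red/blue colouring of $K_{m,b}$ by recording, for each of the $b$ vertices $w_j$ on the large side, its red neighbourhood $R_j\subseteq[m]$ (so its blue neighbourhood is $[m]\setminus R_j$). A red $K_{2,2}$ is precisely a pair $\{s,t\}\subseteq[m]$ lying in two distinct $R_j$, so the colouring avoids red $K_{2,2}$ iff every pair of small-side vertices is covered by at most one $R_j$; a Zarankiewicz-type double count then gives $\sum_{j=1}^{b}\binom{|R_j|}{2}\le\binom{m}{2}$. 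Dually, a blue $K_{3,3}$ is a triple $\{s,t,u\}\subseteq[m]$ contained in three blue neighbourhoods, so avoiding blue $K_{3,3}$ forces $\sum_{j=1}^{b}\binom{m-|R_j|}{3}\le 2\binom{m}{3}$. These two inequalities, together with $0\le|R_j|\le m$, form the backbone of the whole argument.

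For the non-existence cases $m\in\{2,3\}$ I would exhibit avoiding colourings valid for every $b$. When $m=2$ a blue $K_{3,3}$ cannot occur at all, and colouring all edges at one small-side vertex red and all edges at the other blue kills every red $K_{2,2}$; when $m=3$ I would take $R_j=\{1\}$ for all $j$, so no pair is ever doubly covered and no large-side vertex is all-blue, whence neither target appears for arbitrarily large $b$. For $m\in\{4,5,6,7,8\}$ the lower bounds come from balanced constructions: use only two red-degrees, assign the high-red-degree columns pairwise-distinct red pairs so they realise an edge-disjoint substructure of $K_m$ (creating no red $K_{2,2}$), and distribute the blue-heavy columns evenly so that no triple of small-side vertices is blue in three columns. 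For instance, for $m=4$ one takes the six columns realising all $\binom{4}{2}$ red pairs together with eight red-degree-$1$ columns split two per small-side vertex; this colours $K_{4,14}$ with neither target and gives $BR_4\ge 15$. Analogous balanced designs handle $m=5,6,7,8$.

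The upper bounds are where the work concentrates. Feeding the red-degree sequence into the two displayed inequalities and maximising $b$ is an integer program whose optimum, for $m=4$ and $m=6$, already equals $BR_m-1$, so counting alone forces a monochromatic copy once $b=BR_m$. The difficulty—and the step I expect to be the main obstacle—is $m\in\{5,7,8\}$, where the counting optimum exceeds $BR_m-1$ and one must prove that the counting-extremal degree sequences are \emph{not realisable}. The mechanism is rigidity: meeting $\sum_j\binom{|R_j|}{2}=\binom{m}{2}$ forces the red pairs to exhaust $\binom{[m]}{2}$ exactly once, which pins down the complementary blue triples and over-concentrates them, producing a triple that is blue in three columns, i.e.\ a blue $K_{3,3}$. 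For $m=5$, for example, the only candidate on $12$ columns is ten red-degree-$2$ columns using all ten pairs plus two red-degree-$1$ columns, and the two blue $4$-sets of the latter necessarily push the blue triple spanned by the three untouched small-side vertices to multiplicity three. Carrying out this elimination of every near-extremal configuration, organised as a case analysis on the red-degree sequence and on how the red pairs are distributed, is the heart of the proof and must be performed separately for each of $m=5,7,8$.
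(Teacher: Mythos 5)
This statement is one the paper only quotes from \cite{bi2018another, 2022arXiv220112844R}; the paper contains no proof of it, so there is no in-paper argument to compare against line by line. The closest internal analogue is the paper's own treatment of $BR_m(K_{2,2},K_{6,6})$ (Lemma \ref{l1} and Theorems \ref{th2}, \ref{th3}), which is organised from the small side via the maximum red degree $\Delta(G_X)$ and unions of neighbourhoods, whereas you work from the large side via a degree-sequence double count plus design-theoretic rigidity; your organisation is legitimate and in several places cleaner. Moreover, the parts of your plan that are actually carried out check out: the two Zarankiewicz-type inequalities are the right necessary conditions; the $m=2,3$ colourings and the $m=4$ lower-bound construction are correct; your classification of which cases pure counting settles is exactly right (the integer optimum of your program is $14$ for $m=4$ and at most $11$ for $m=6$, so counting alone gives those upper bounds, while it is $12$ for $m=5$ and at least $9$ for $m=7,8$, so there it cannot suffice); and your $m=5$ elimination is correct and essentially complete, since $(n_1,n_2)=(2,10)$ is the only degree sequence on $12$ columns satisfying both inequalities.

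The genuine gaps are twofold. First, the lower bounds for $m=5,6,7,8$ are not ``analogous balanced designs'' that can be waved at: for $m=7$ the required colouring of $K_{7,8}$ is the Fano plane together with one all-blue column, and its verification needs the fact that every non-line triple is disjoint from exactly one line; for $m=8$ one needs eight edge-disjoint triangles in $K_8$ no three of which lie inside a common $5$-set (e.g.\ $\{i,i+1,i+3\}$ over $\mathbb{Z}_8$). Neither is exhibited, and neither follows by analogy with the $m=4$ pattern. Second, and more seriously, the upper bounds for $m=7,8$ --- which you yourself call the heart of the proof --- are only described, not proved, and the rigidity mechanism you state (equality $\sum_j\binom{|R_j|}{2}=\binom{m}{2}$ forcing the red pairs to exhaust all pairs) does not even apply verbatim there: for $m=8$ the counting-feasible $9$-column configurations (nine edge-disjoint triangles, or eight triangles plus one red edge) use at most $27<28=\binom{8}{2}$ pairs, so no exhaustion is forced, and one must instead prove, for instance, that any nine edge-disjoint triangles in $K_8$ contain three inside a common $5$-set; for $m=7$ several distinct feasible sequences (seven triangles plus two red-degree-one columns; six triangles, one red edge and two red-degree-one columns; \dots) each need separate elimination. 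Until those case analyses are written out, what you have is a correct strategy with a complete $m\le 6$ portion, not a proof of the full statement.
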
 	

	\begin{theorem}\cite{ bi2019new}
	For each $m\in \{1,2,\ldots,9\}$, we have:
	\[
	BR_m(K_{3,3},K_{3,3})= \left\lbrace
	\begin{array}{ll}	
		\text{does not exist}, & ~~~where~~m=2,3,4,\vspace{.2 cm}\\
		41 & ~~~where~~~m=5,6,\vspace{.2 cm}\\
		29 & ~~~where~~~ m=7,8.\vspace{.2 cm}\\
		
	\end{array}
	\right.
	\]	
\end{theorem}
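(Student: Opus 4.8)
The plan is to pin down $BR_m(K_{3,3},K_{3,3})$ by separating the non-existence range $m\le 4$ from the existence range $m\ge 5$, and in the latter by matching a double-counting upper bound against explicit colourings. Throughout write $K_{m,b}$ with parts $X$ ($|X|=m$) and $Y$ ($|Y|=b$), and for $y\in Y$ let $r(y),s(y)\subseteq X$ be its red- and blue-neighbourhoods, so $|r(y)|+|s(y)|=m$. A red $K_{3,3}$ is a triple $T\in\binom{X}{3}$ together with three vertices of $Y$ all red-adjacent to every point of $T$; symmetrically for blue.

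First I would dispose of the non-existence cases. For $m\le 2$ no copy of $K_{3,3}$ fits at all. For $m=3$, colouring every $y$ with at least one edge of each colour leaves no $y$ monochromatic to all of $X$, so no monochromatic $K_{3,3}$ arises for any $b$; for $m=4$, giving every $y$ exactly two red and two blue edges makes $|r(y)|=|s(y)|=2<3$, so again no $y$ covers a monochromatic triple. Hence $BR_m$ does not exist for $m\in\{2,3,4\}$. For $m\ge 5$ the pigeonhole bound $\max(|r(y)|,|s(y)|)\ge 3$ shows every $y$ covers a monochromatic triple, which is exactly what makes the parameter finite.

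The upper bounds then come from a single counting inequality. In a colouring with no monochromatic $K_{3,3}$, each triple $T\in\binom{X}{3}$ is red-covered by at most $2$ vertices of $Y$ and blue-covered by at most $2$; summing the two contributions over all triples gives
\[
\sum_{y\in Y}\left[\binom{|r(y)|}{3}+\binom{m-|r(y)|}{3}\right]\le 4\binom{m}{3}.
\]
Setting $c_m=\min_{0\le k\le m}\bigl[\binom{k}{3}+\binom{m-k}{3}\bigr]$ yields $c_m\,b\le 4\binom{m}{3}$. A short computation gives $c_5=1,\ c_6=2,\ c_7=5,\ c_8=8$, attained at the splits $3{+}2,\ 3{+}3,\ 4{+}3,\ 4{+}4$, whence $b\le 40,40,28,28$ respectively. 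Thus every colouring of $K_{5,41},K_{6,41},K_{7,29},K_{8,29}$ contains a monochromatic $K_{3,3}$, i.e.\ $BR_m\le 41,41,29,29$.

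For the matching lower bounds I would colour $K_{m,b-1}$ so that every triple of $X$ is red-covered \emph{exactly} twice and blue-covered exactly twice: this meets the counting bound with equality, leaves no triple covered three times, and so contains no monochromatic $K_{3,3}$. Each $y$ gets a red-neighbourhood realising the optimal split $c_m$, and these are distributed to make the covering multiplicities uniform. For $m=5$ take twenty vertices whose red sets are the ten triples of $X$ (twice each) together with twenty whose blue sets are those triples (twice each); for $m=6$ the self-complementary split $3{+}3$ lets a single family of forty vertices (red set each triple twice) balance both colours at once. The main obstacle is $m=7,8$, where ``every triple covered exactly twice'' forces a genuine balanced $3$-design: for $m=8$ one can take the Steiner quadruple system $S(3,4,8)$ (the self-complementary $3$-$(8,4,1)$ design with $14$ blocks) with each block used twice, giving $28$ vertices of split $4{+}4$; for $m=7$ one needs a mixed covering by $14$ triples and $14$ quadruples with all triple-multiplicities equal to $2$. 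Verifying that such designs exist and that both colour classes are simultaneously balanced is the technical heart of the argument; once the constructions are in hand, combining them with the counting bound yields the stated equalities $BR_5=BR_6=41$ and $BR_7=BR_8=29$.
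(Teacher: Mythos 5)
The paper gives no proof of this statement: it is quoted as a known result from the cited reference \cite{bi2019new}, so there is no internal argument to compare against, and your proposal must stand on its own. Most of it does. The counting upper bound is correct and complete: with no monochromatic $K_{3,3}$, every triple of $X$ is red-covered at most twice and blue-covered at most twice, so $c_m\, b \le 4\binom{m}{3}$ where $c_m=\min_{0\le k\le m}\bigl[\binom{k}{3}+\binom{m-k}{3}\bigr]$, and your values $c_5=1$, $c_6=2$, $c_7=5$, $c_8=8$ do give $b\le 40, 40, 28, 28$, hence the claimed upper bounds $41,41,29,29$. The non-existence argument for $m\in\{2,3,4\}$ is fine, and the lower-bound colourings for $m=5,6$ (every triple of $X$ used twice as a red set, and for $m=5$ also twice as a blue set) and for $m=8$ (each block of the self-complementary Steiner system $S(3,4,8)$ used twice) are correct, since in each case every triple of $X$ is covered exactly twice in each colour.

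The genuine gap is $m=7$: you assert that one needs a mixed family of $14$ triples and $14$ quadruples with every triple-multiplicity equal to $2$ in both colours, and you explicitly defer its existence, calling it the technical heart. Without that construction the inequality $BR_7(K_{3,3},K_{3,3})\ge 29$ is unproven, so one of the four claimed equalities is missing its lower bound. The gap is fixable by an object you almost name: let $X$ be the point set of the Fano plane and take as red neighbourhoods each of the $7$ lines with multiplicity $2$ together with each of the $7$ line-complements with multiplicity $2$, giving $28$ vertices of $Y$. A triple that is a line is red-covered only by its own two copies, since every two lines of the Fano plane meet and hence no line-complement contains it; a triple that is not a line lies in no line and is disjoint from exactly one line (it meets $6$ of the $7$ lines, three in two points and three in one point), so it is red-covered exactly by the two copies of that line's complement. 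Thus every triple is red-covered exactly twice, and since this family of red sets is closed under complementation, every triple is blue-covered exactly twice as well, so this colouring of $K_{7,28}$ has no monochromatic $K_{3,3}$. With that paragraph inserted, your argument is complete and correct.
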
 
	\begin{theorem}\cite{Rowshan2022TheR}
	For each $m\in \{1,2,\ldots,8\}$, we have:
	\[
	BR_m(K_{2,2},K_{5,5})= \left\lbrace
	\begin{array}{ll}	
		\text{does not exist}, & ~~~where~~m=2,3,4,5,\vspace{.2 cm}\\
		40 & ~~~where~~~m=6,\vspace{.2 cm}\\
	30 & ~~~where~~~ m=7,8.\vspace{.2 cm}\\
		
	\end{array}
	\right.
	\]	
\end{theorem}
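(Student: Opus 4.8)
The plan is to recast each two-colouring as a set system on the small side and reduce the whole theorem to an extremal question about families of subsets. Colour the classes red ($=1$) and blue ($=2$), let $X,Y$ be the parts of $K_{m,b}$ with $|X|=m$ and $|Y|=b$, and to each $y\in Y$ associate its red neighbourhood $R(y)=\{x\in X:\ xy\text{ is red}\}\subseteq X$. Two translations drive everything. First, a red $K_{2,2}$ is exactly a pair $y\ne y'$ with $|R(y)\cap R(y')|\ge 2$, so \emph{no red $K_{2,2}$} is equivalent to the family $\{R(y)\}_{y\in Y}$ being \emph{linear}: $|R(y)\cap R(y')|\le 1$ for all distinct $y,y'$ (equivalently every pair of $X$-vertices lies in at most one $R(y)$ of size $\ge 2$, while singletons and $\emptyset$ may repeat). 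Second, a blue $K_{5,5}$ on a $5$-set $S\subseteq X$ needs five $y\in Y$ with $R(y)\cap S=\emptyset$; writing $T=X\setminus S$ with $|T|=m-5$ this is $R(y)\subseteq T$. Thus \emph{no blue $K_{5,5}$} is equivalent to: for every $(m-5)$-subset $T\subseteq X$, at most four vertices $y$ satisfy $R(y)\subseteq T$. Under this dictionary $BR_m(K_{2,2},K_{5,5})-1$ is precisely the maximum number of vertices admitting such a family, and ``does not exist'' means such families are unbounded.

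The cases $m\le 5$ and the lower bounds are then short. For $m\le 5$ I would colour red every edge at one fixed vertex $x_0\in X$ and blue all others: the red graph is a star, so no red $K_{2,2}$; and no blue $K_{5,5}$ can use $x_0$, leaving only $m-1\le 4$ usable $X$-vertices, so none exists for any $b$. This gives non-existence for $m=2,3,4,5$. For the lower bounds I would exhibit explicit extremal families realising $b=BR_m-1$. For $m=6$ take four copies of each singleton $\{x\}$ together with all $\binom{6}{2}=15$ pairs, giving $24+15=39$ sets; for $m=7$ take all $\binom{7}{2}=21$ pairs together with two copies of one singleton and one copy of each remaining singleton, giving $21+8=29$; for $m=8$ take all $\binom{8}{2}=28$ pairs plus one extra singleton, giving $29$. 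In each case linearity is immediate, and condition (ii) is checked by verifying that every $(m-5)$-subset $T$ contains at most four of the recorded sets. These produce avoiding colourings of $K_{6,39}$, $K_{7,29}$, $K_{8,29}$, so $BR_6\ge 40$ and $BR_7,BR_8\ge 30$.

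The substance is the matching upper bounds. Writing $n_r$ for the number of $y$ with $|R(y)|=r$, linearity yields the partial-linear-space inequality $\sum_{y:\,|R(y)|\ge 2}\binom{|R(y)|}{2}\le\binom{m}{2}$, which caps the number of sets of size $\ge 2$ at $\binom{m}{2}$, attained only by using all pairs. Summing condition (ii) over all $(m-5)$-subsets $T$ counts each $y$ with $|R(y)|=r$ exactly $\binom{m-r}{m-5-r}$ times and gives $\sum_r \binom{m-r}{m-5-r}\,n_r\le 4\binom{m}{5}$. For $m=6$ these already close the problem: (ii) reads $n_0+s_x\le 4$ for each singleton $\{x\}$, summing gives $6n_0+n_1\le 24$ whence $n_0+n_1\le 24$, and combined with $n_{\ge 2}\le 15$ this yields $b\le 39$.

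The hard part, and the main obstacle, is $m=7,8$: here the \emph{summed} blue inequality is too weak (it permits $b$ as large as $31$ for $m=7$), so one must exploit the individual per-subset constraints together with the linear structure. For $m=7$ condition (ii) says $n_0+s_i+s_j+[\{i,j\}\text{ occurs as some }R(y)]\le 4$ for every pair $\{i,j\}$; pushing $b$ to its maximum forces all $21$ pairs to be used, after which these individual inequalities restrict the singleton multiplicities so severely (with $n_0=0$, at most one vertex may have $s_x=2$ and the rest $s_x\le 1$, so $n_1\le 8$) that $b\le 29$. The analysis for $m=8$ is analogous but indexed by the $\binom{8}{3}=56$ triples, each of which already contains three used pairs, again forcing $b\le 29$. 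I expect essentially all of the work to lie in this finite but intricate case analysis pinning down the admissible singleton/pair/triple distributions; once that is in place, the constructions and the cases $m\le 6$ are routine verifications of conditions (i) and (ii).
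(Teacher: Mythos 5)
You should first note that the paper you were given does not actually prove this statement: it is quoted from \cite{Rowshan2022TheR} as background, so the only internal point of comparison is the paper's proof of the analogous $K_{6,6}$ result (Theorems 2.3--2.5), which works on the $X$-side via $\Delta(G_X)$, the intersection bound $|N_G(x_i)\cap N_G(x_j)|\le 1$, and union-of-neighborhood counts with pigeonhole. Your dual dictionary on the $Y$-side (a linear family $\{R(y)\}$ plus the condition that every $(m-5)$-subset $T$ contains at most four of the $R(y)$) is an equivalent but differently organized framework, and large parts of your proposal are complete and correct: the star coloring for $m\le 5$ is exactly the paper's Theorem 2.3 argument; your three constructions are valid (linearity holds, and each relevant $T$ contains at most four sets, e.g.\ $s_i+s_j+1\le 4$ for $m=7$ and $3+1\le 4$ for $m=8$); and your $m=6$ upper bound, combining $n_0+s_x\le 4$ with $n_{\ge 2}\le\binom{6}{2}$, is a genuine, clean proof that $b\le 39$.

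The gap is in the upper bounds for $m=7,8$, which is precisely where the theorem's difficulty lives. Your pivot step, ``pushing $b$ to its maximum forces all $21$ pairs to be used,'' is not an argument, and as a forcing claim it is false: $b=29$ is also attained by families that do not match your analyzed configuration. For $m=7$, take the triple $\{1,2,3\}$, the $18$ pairs not inside it, and singleton multiplicities $(2,2,2,1,1,1,1)$: every $T$ inside the triple holds $4$ sets and every other $T$ at most $4$, yet three pairs are unused and a size-$3$ set is present; alternatively take all $21$ pairs, one vertex with $R(y)=\emptyset$, and one singleton per vertex of $X$, which attains $29$ with $n_0=1$. So a proof of $b\le 29$ cannot restrict to the case ($n_0=0$, all pairs used, only pairs and singletons) that you treat; it must bound \emph{every} admissible family, including those with empty sets, unused pairs, and sets of size $\ge 3$ (a triple frees three of the per-$T$ constraints, raising the local singleton budget, which is exactly how the example above reaches $29$). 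The same issue recurs for $m=8$, where in addition the summed inequality $56n_0+21n_1+6n_2+n_3\le 4\binom{8}{3}=224$ only yields $b\le 30$, so one point better than the truth must again come from the individual triple constraints run through all these cases. The missing case analysis is finite and does close (for instance, for $m=7$ with all pairs used, $n_0+s_i+s_j\le 3$ for every pair gives $n_0+n_1\le 8$ in all subcases, and trading a pair for a triple or larger set never gains), but as written the hardest third of the theorem is asserted rather than proved -- you flag it yourself as ``expected'' work -- so the proposal is an incomplete proof rather than a wrong one.
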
 
	In this paper,  we compute the exact value of $BR_m(K_{2,2}, K_{6,6})$ for some $m\geq 2$ as follow.
	\begin{theorem}\label{M.th}[Main theorem]
		For each $m\in \{1,2,\ldots,8\}$, we have:
		\[
		BR_m(K_{2,2},K_{6,6})= \left\lbrace
		\begin{array}{ll}	
			\text{does not exist}, & ~~~where~~m=2,3,4,5,6,\vspace{.2 cm}\\
		57 & ~~~where~~~m=7,\vspace{.2 cm}\\
		45 & ~~~where~~~ m=8.\vspace{.2 cm}\\
		 
		\end{array}
		\right.
		\]	
	\end{theorem}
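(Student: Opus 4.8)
The plan is to recast both forbidden configurations as counting constraints on the large part and then optimise. Write $M$ for the part of size $m$, $B$ for the part of size $b$, and call colour $1$ red and colour $2$ blue; for $v\in B$ let $N_R(v)\subseteq M$ denote its red neighbourhood and $d_R(v)=|N_R(v)|$. A red $K_{2,2}$ is precisely two vertices of $B$ with two common red neighbours, so the colouring avoids red $K_{2,2}$ if and only if every pair $T\in\binom{M}{2}$ satisfies $g(T)\le 1$, where $g(T)=\#\{v\in B: T\subseteq N_R(v)\}$; summing over $T$ yields the K\H{o}v\'ari--S\'os--Tur\'an inequality $\sum_{v\in B}\binom{d_R(v)}{2}=\sum_{T}g(T)\le\binom{m}{2}$. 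Dually, a blue $K_{6,6}$ on a fixed $6$-set $S\subseteq M$ requires six vertices of $B$ whose blue neighbourhood contains $S$, i.e.\ with $N_R(v)\subseteq T$ for $T=M\setminus S$; hence the colouring avoids blue $K_{6,6}$ if and only if $f(T)\le 5$ for every $(m-6)$-subset $T$, where $f(T)=\#\{v\in B: N_R(v)\subseteq T\}$. I would then classify the vertices of $B$ by $d_R(v)$, writing $n_0$ for the fully blue vertices, $n_1^{u}$ for those with a single red edge to $u$, $n_2^{T}$ for those with red neighbourhood exactly $T$, and $n_{\ge 3}$ for the rest.

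From this reformulation the non-existence for $2\le m\le 6$ is immediate. If $m\le 5$, no blue $K_{6,6}$ fits, so colouring every edge blue avoids both graphs for all $b$. If $m=6$ then $T=\varnothing$, so a vertex of $B$ can lie in a blue $K_{6,6}$ only if it is entirely blue; colouring so that every vertex of $B$ has exactly one red edge makes all red degrees at most $1$ (no red $K_{2,2}$) and all blue degrees equal to $5<6$ (no blue $K_{6,6}$), for every $b$. Thus $BR_m(K_{2,2},K_{6,6})$ does not exist for $2\le m\le 6$.

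For the upper bounds I would aggregate the two families of inequalities. When $m=7$ the sets $T$ are singletons, so the blue condition reads $n_0+n_1^{u}\le 5$ for each $u\in M$; summing the seven inequalities gives $7n_0+n_1\le 35$ (with $n_1=\sum_u n_1^u$), while $g(T)\le 1$ gives $\sum_v\binom{d_R(v)}{2}\le\binom{7}{2}=21$, so the number $n_{\ge2}$ of vertices of red degree at least $2$ satisfies $n_{\ge2}\le 21$; hence $b=n_0+n_1+n_{\ge2}\le 56-6n_0\le 56$. When $m=8$ the sets $T$ are pairs, the blue condition is $n_0+n_1^{u}+n_1^{u'}+n_2^{T}\le 5$ for each $T=\{u,u'\}$, and summing over the $28$ pairs yields $28n_0+7n_1+n_2\le 140$; together with $n_2+3n_{\ge3}\le\binom{8}{2}=28$ from the red side, eliminating $n_1$ and $n_{\ge3}$ gives $b\le 20-3n_0+\tfrac{28}{3}+\tfrac{11}{21}n_2\le 44$, the maximum occurring at $n_0=0,\ n_2=28$. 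Hence every colouring of $K_{7,57}$ (resp.\ $K_{8,45}$) contains a red $K_{2,2}$ or a blue $K_{6,6}$.

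The matching lower bounds come from colourings of $K_{7,56}$ and $K_{8,44}$ meeting every inequality with equality. For $m=7$ I would use no fully blue vertex, five single-red-edge vertices pointing at each of the seven elements of $M$ (thirty-five in all), and one red-degree-$2$ vertex for each of the twenty-one pairs; then $g(T)=1$ and every singleton is covered exactly five times, so both graphs are avoided. For $m=8$ I would take $n_1^{u}=2$ for every $u$ (sixteen single-red-edge vertices) together with one red-degree-$2$ vertex per pair (twenty-eight vertices), so that $f(T)=2+2+1=5$ and $g(T)=1$ for all $T$. The main obstacle is the tightness of these bounds: the aggregated inequalities are only a relaxation of the per-pair constraints, so for the lower bound one must verify that all $\binom{m}{2}$ blue inequalities $f(T)\le 5$ and red inequalities $g(T)\le 1$ are simultaneously satisfiable at the optimum, and for the upper bound that fully blue vertices and vertices of high red degree (the $n_0$ and $n_{\ge3}$ terms) cannot push $b$ past the extremal configuration. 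It is exactly this interplay that pins the answers at $57$ and $45$ rather than something larger.
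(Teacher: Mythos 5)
Your proof is correct, and its upper-bound halves take a genuinely different route from the paper. The paper proves $BR_7(K_{2,2},K_{6,6})\le 57$ and $BR_8(K_{2,2},K_{6,6})\le 45$ by case analysis on the maximum red degree of the small side (a lemma for $\Delta\ge 12$, then separate claims for $\Delta=7,8,9,10,11$, each with nested sub-cases), whereas you classify the vertices of the \emph{large} side by red degree and aggregate the constraints linearly: summing the per-$6$-set conditions $f(T)\le 5$ over all $T$, together with the K\H{o}v\'ari--S\'os--Tur\'an count $\sum_{v}\binom{d_R(v)}{2}\le\binom{m}{2}$, yields $b\le 56-6n_0\le 56$ for $m=7$ and $b\le 20-3n_0+\tfrac{28}{3}+\tfrac{11}{21}n_2\le 44$ for $m=8$ in a few lines, eliminating all case analysis; I checked both aggregations and they are exact, with no integrality or dropped-case issues. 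Your lower-bound constructions are, after dualizing, the same set systems as the paper's: for $m=7$ each small-side vertex gets red degree $5+6=11$ and any two share exactly one common red neighbour, matching the paper's sets (A1)--(A7), and for $m=8$ the degrees $2+7=9$ match (B1)--(B8); but your description (one red-degree-$2$ vertex per pair of $M$ plus a fixed number of pendant vertices per element of $M$) makes the verifications $g(T)=1$ and $f(T)=5$ transparent, where the paper only asserts the analogous counts. One remark: your closing hedge about ``tightness'' is unnecessary --- since the constructions you give satisfy every per-pair and per-$6$-set constraint (with equality) and the aggregated upper bounds hold unconditionally for any colouring avoiding both graphs, the argument as written is already complete. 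What your approach buys is brevity and generality (the identical computation handles $BR_m(K_{2,2},K_{n,n})$ for other $n$ with $m=n+1,n+2$); what the paper's longer analysis provides in exchange is some structural information about near-extremal colourings, but nothing needed for the theorem itself.
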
 

 Suppose  that $G[X, Y]$, is a bipartite graph with bipartition sets $X$ and $Y$. Let $E(G[ X',Y'])$, denotes the edge set of $G[ X', Y']$, where $X'\subseteq X$ and $Y'\subseteq Y$. We use $\Delta (G_X)$ and $\Delta (G_{Y})$ to denote the maximum degree of  vertices in  part $X$ and $Y$ of $G$, respectively. The degree of a vertex $v\in V(G)$, is denoted by $\deg_G(v)$. For each $v\in X(Y)$, $N_G(v) =\{ u\in Y(X), ~~vu\in E(G)\}$. For given  graphs $G$, $H$, and $F$, we say $G$ is $2$-colorable to $(H, F)$, if there is a  subgraph  of $G$ say $G'$, where $H\nsubseteq G'$ and $F\nsubseteq \overline{G'}$. We use $G\rightarrow (H, F)$, to show that  $G$  is $2$-colorable to $(H, F)$.  To simplify we use $[n]=\{1,2,\ldots, n\}$.

	\section{\bf Proof of the main results}
In this section, we establish the main results of the paper. Before that, we give a lemma as follow, which help to prove Theorem \ref{M.th}.  
\begin{lemma}\label{l1} Suppose  that $(X=\{x_1,\ldots,x_m\},Y=\{y_1,\ldots ,y_{n}\})$, where $m\geq 7$ and $n\geq 12$ are the partition  sets of  $K=K_{m,n}$.	Let $G$ is a subgraph of $K_{m,n}$. If $\Delta (G_X)\geq 12$, then either $K_{2,2} \subseteq G$ or $K_{6,6} \subseteq \overline{G}$.
\end{lemma}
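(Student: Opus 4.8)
The plan is to prove the contrapositive of the first alternative: assuming $K_{2,2}\nsubseteq G$, I will locate a copy of $K_{6,6}$ in the bipartite complement $\overline{G}$. The only structural fact I need from the hypothesis $K_{2,2}\nsubseteq G$ is that any two vertices of $X$ have at most one common neighbour in $Y$ under $G$; for if $x_i,x_j\in X$ shared two neighbours $y_k,y_\ell\in Y$, then $\{x_i,x_j\}$ together with $\{y_k,y_\ell\}$ would already span a $K_{2,2}$ inside $G$. This codegree bound is precisely what converts a single high-degree vertex into a large complete bipartite graph in the complement.

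Concretely, let $x\in X$ be a vertex realising $\Delta(G_X)\geq 12$, so that $|N_G(x)|\geq 12$, and fix a subset $A\subseteq N_G(x)\subseteq Y$ with $|A|=12$. For every other vertex $x'\in X\setminus\{x\}$, the codegree bound applied to the pair $\{x,x'\}$ gives $|N_G(x')\cap A|\leq 1$, since $x$ is adjacent to all of $A$. Hence each $x'\in X\setminus\{x\}$ has at most one $G$-neighbour in $A$, and is therefore joined in $\overline{G}$ to at least $11$ of the $12$ vertices of $A$.

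To finish I would select the two sides of the desired $K_{6,6}$. Because $m\geq 7$, the set $X\setminus\{x\}$ contains at least six vertices; choose any six of them, say $x'_1,\ldots,x'_6$. Each $x'_i$ has at most one $G$-neighbour in $A$, so the union of these neighbours is a set $B\subseteq A$ with $|B|\leq 6$. Since $|A|=12$, the set $A\setminus B$ has at least six vertices; pick any six of them as the $Y$-side. By construction no $x'_i$ has a $G$-neighbour among these chosen vertices, so every one of the $36$ corresponding pairs is a non-edge of $G$, i.e.\ an edge of $\overline{G}$, and the twelve selected vertices span a $K_{6,6}$ in $\overline{G}$, as required. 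The argument is a two-step pigeonhole count, so there is no serious obstacle; the only point demanding care is the bookkeeping that guarantees the surplus $12-6=6$ leaves exactly enough room on the $Y$-side, which is why the threshold in the hypothesis is $12$ rather than anything smaller.
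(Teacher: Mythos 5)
Your proof is correct and follows the same approach as the paper: pick a vertex $x$ of degree at least $12$, use $K_{2,2}\nsubseteq G$ to bound each codegree with $x$ by one, and then find six vertices of $X\setminus\{x\}$ avoiding at least $12-6=6$ common non-neighbours inside $N_G(x)$. The paper compresses this final counting step into "one can check," whereas you spell it out explicitly; the substance is identical.
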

\begin{proof}
	Without loss of generality	(W.l.g), let $\Delta (G_X)=12$ and  $N_G(x)=Y'$, where  $|Y'|=12$ and $K_{2,2} \nsubseteq G$.  Therefore, 	 $|N_G(x')\cap Y'|\leq 1$ for each $x'\in X\setminus\{x\}$. Since $|X|\geq 7$ and $|Y'|= 12$, so one can check that  $K_{6,6} \subseteq \overline{G}[X\setminus\{x\}, Y']$.
\end{proof}
To prove   Theorem \ref{M.th}, we start with  the following theorem. 
\begin{theorem}\label{th1} For each $m\in \{2,3,4,5,6\}$, the number $BR_m(K_{2,2}, K_{6,6})$ does not exist.
 
\end{theorem}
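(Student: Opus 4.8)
The plan is to prove non-existence by exhibiting, for each $m\in\{2,3,4,5,6\}$ and \emph{every} $n$, a valid colouring of $K_{m,n}$ that contains neither a colour-$1$ copy of $K_{2,2}$ nor a colour-$2$ copy of $K_{6,6}$; in the notation of the introduction this amounts to showing $K_{m,n}\to(K_{2,2},K_{6,6})$ for all $n$. Since the defining property of $BR_m$ is monotone in the second parameter, producing such a good colouring for arbitrarily large $n$ means no least $b$ can work, so $BR_m(K_{2,2},K_{6,6})$ does not exist.

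The first step is a structural observation about how $K_{6,6}$ can embed in a subgraph of the bipartite host $K_{m,n}$. Because $K_{6,6}$ is connected and bipartite, any copy of it inside $K_{m,n}$ must be type-preserving: each of its two classes of size $6$ is mapped entirely into one of the parts $X$ or $Y$. In particular, one part of size $6$ must land inside $X$, which is impossible whenever $|X|=m<6$. I would record this as the engine driving the easy cases.

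For $m\in\{2,3,4,5\}$ this observation finishes the argument with no construction at all: colour every edge of $K_{m,n}$ with colour $2$, i.e.\ take the colour-$1$ graph $G'$ to be empty. Then $K_{2,2}\nsubseteq G'$ trivially, while $\overline{G'}=K_{m,n}$ contains no $K_{6,6}$ by the size obstruction above, and this holds for all $n$. The only case requiring a genuine construction is $m=6$, where $|X|=6$ and the monochromatic colouring fails because $K_{6,6}\subseteq K_{6,n}$. Here I would fix a single vertex $x_1\in X$ and let $G'$ be the set of all edges incident to $x_1$. Then $G'$ is a star, hence $K_{2,2}$-free; and any copy of $K_{6,6}$ in $\overline{G'}$ would be forced to use all six vertices of $X$, in particular $x_1$, yet $x_1$ is incident to no edge of $\overline{G'}$, so no such copy exists, again for every $n$.

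The main (indeed the only) subtlety is recognising that the boundary value $m=6$ cannot be disposed of by an all-one-colour scheme and instead needs the star construction, which saturates one vertex of $X$ so as to destroy every potential blue $K_{6,6}$ while keeping the red graph $K_{2,2}$-free; the cases $m\le 5$ are immediate from the embedding obstruction.
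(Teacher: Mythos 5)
Your proposal is correct and is essentially the paper's own argument: the paper takes $G=K_{1,t}$ (the star at one vertex of $X$) uniformly for all $m\in\{2,\ldots,6\}$, so that $G$ is $K_{2,2}$-free and $\overline{G}=K_{m-1,t}$ cannot contain $K_{6,6}$ since $m-1\le 5$ — which is exactly your $m=6$ construction. Your use of the empty colour-$1$ graph for $m\le 5$ is just a trivial simplification of the same size obstruction, not a different route.
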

\begin{proof}
Suppose that $m\in \{2,3,4,5,6\}$. For an arbitrary integer $t \geq 6$, set $K=K_{m,t}$ and let $G$ be a subgraph of $K$, such that $G= K_{1,t}$. Therefore, we have $\overline{G}= K_{m-1,t}$. Hence, one can checked that, neither a $K_{2,2}\subseteq G$ nor a  $K_{6,6}\subseteq \overline{G}$. Therefore for each $m\in \{2,3,4,5,6\}$, the number $BR_m(K_{2,2}, K_{6,6})$ does not exist.
\end{proof}
In the next results, we determined the size of $BR_m(K_{2,2}, K_{6,6})$ for $m=7$.
\begin{theorem}\label{th2}
		$BR_7(K_{2,2}, K_{6,6})=57$.
\end{theorem}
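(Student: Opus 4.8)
The plan is to prove the two inequalities $BR_7(K_{2,2},K_{6,6})\le 57$ and $BR_7(K_{2,2},K_{6,6})\ge 57$ separately, working throughout with the color-$1$ subgraph $G\subseteq K_{7,b}$ on parts $X=\{x_1,\dots,x_7\}$ and $Y$; the color-$2$ graph is then $\overline{G}$. For each $y\in Y$ I write $d(y)=\deg_G(y)\in\{0,1,\dots,7\}$ and partition $Y$ into $Y_0=\{y:d(y)=0\}$, the sets $Y_1(x_i)=\{y:N_G(y)=\{x_i\}\}$, and $Y_{\ge 2}=\{y:d(y)\ge 2\}$; set $a=|Y_0|$, $b_i=|Y_1(x_i)|$, $B=\sum_{i}b_i$ and $c=|Y_{\ge 2}|$, so that $a+B+c=b$. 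The two forbidden configurations translate into clean numerical constraints on these quantities, and this is the engine of the whole argument.

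For the upper bound I would show $K_{7,57}\nrightarrow(K_{2,2},K_{6,6})$, i.e.\ that no $G$ simultaneously avoids a color-$1$ $K_{2,2}$ and a color-$2$ $K_{6,6}$. The key observation is that, since $|X|=7$, any copy of $K_{6,6}$ in $\overline{G}$ must use the $6$-set $X'=X\setminus\{x^*\}$ for some $x^*\in X$, and a vertex $y$ can serve in such a copy exactly when $N_G(y)\cap X'=\varnothing$, that is when $N_G(y)\subseteq\{x^*\}$; thus the eligible vertices for $x^*$ are precisely $Y_0\cup Y_1(x^*)$. Avoiding $K_{6,6}$ therefore forces $a+b_i\le 5$ for every $i$, and summing over $i$ gives $7a+B\le 35$. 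On the other side, $K_{2,2}\nsubseteq G$ means every pair in $X$ lies in at most one neighborhood $N_G(y)$, so $\sum_{y}\binom{d(y)}{2}\le\binom{7}{2}=21$; since each $y\in Y_{\ge 2}$ contributes at least $1$, this yields $c\le 21$. Combining, $57=a+B+c\le a+(35-7a)+21=56-6a\le 56$, a contradiction. (Lemma \ref{l1} can be invoked to cap $\Delta(G_X)$, but the degree count in $Y$ already closes the argument.)

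For the lower bound I would exhibit an explicit $G\subseteq K_{7,56}$ witnessing $K_{7,56}\to(K_{2,2},K_{6,6})$. Take $a=0$, assign to exactly $5$ vertices of $Y$ each singleton neighborhood $\{x_i\}$ (so $b_i=5$ for all $i$, using $35$ vertices), and assign to exactly one vertex of $Y$ each of the $\binom{7}{2}=21$ pairs $\{x_i,x_j\}$ as its neighborhood; this accounts for all $35+21=56$ vertices. Then $G$ is $K_{2,2}$-free because any two distinct neighborhoods (singletons or pairs) meet in at most one vertex of $X$, so no two $y$'s share two common $X$-neighbors. Moreover, for every excluded vertex $x^*$ the only eligible $y$'s are the $5$ vertices with $N_G(y)=\{x^*\}$, since every pair-vertex meets $X\setminus\{x^*\}$; hence at most $5<6$ vertices can serve a would-be $K_{6,6}$, and $\overline{G}$ contains no $K_{6,6}$. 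This shows $56$ is not forcing, so together with the previous paragraph $BR_7(K_{2,2},K_{6,6})=57$.

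The step I expect to require the most care is the exact characterization of which $Y$-vertices can appear in a monochromatic $K_{6,6}$ of $\overline{G}$: it is essential that $|X|=7$, so that a $6$-subset of $X$ is the complement of a single vertex and eligibility reduces to $N_G(y)\subseteq\{x^*\}$. Getting this bookkeeping right is exactly what powers both the counting bound $a+b_i\le 5$ in the forcing direction and the tightness of the $56$-vertex construction; the remaining inequalities are routine.
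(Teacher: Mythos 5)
Your proof is correct, and your upper-bound argument takes a genuinely different route from the paper's. The paper proves $K_{7,57}\rightarrow(K_{2,2},K_{6,6})$ by a case analysis on $\Delta(G_X)$: Lemma \ref{l1} handles $\Delta\geq 12$, the case $\Delta\leq 8$ is immediate, and Claims \ref{c1}--\ref{c3} plus further ad hoc counting dispose of $\Delta=9,10,11$. You instead classify the $Y$-vertices by degree and double count: since $|X|=7$, any $K_{6,6}$ in $\overline{G}$ must use $X\setminus\{x^*\}$ for some $x^*$, so its $Y$-vertices are exactly those with $N_G(y)\subseteq\{x^*\}$; avoiding $K_{6,6}$ thus gives $a+b_i\leq 5$ for every $i$, hence $7a+B\leq 35$, while $K_{2,2}$-freeness gives $\sum_{y}\binom{d(y)}{2}\leq\binom{7}{2}=21$, hence $c\leq 21$, and then $57=a+B+c\leq 56-6a$ is a contradiction. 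This replaces the paper's two pages of case analysis with a few lines, needs neither Lemma \ref{l1} nor any degree assumption, and it also explains the answer structurally: the identical count shows $BR_{n+1}(K_{2,2},K_{n,n})=n^2+\binom{n+1}{2}$ in general, consistent with the values $BR_4(K_{2,2},K_{3,3})=15$ and $BR_6(K_{2,2},K_{5,5})=40$ quoted in the introduction. What the paper's heavier machinery buys is applicability beyond this case: for $m=8$ (Theorem \ref{th3}) a $6$-subset of $X$ is no longer the complement of a single vertex, your eligibility characterization breaks down, and something like the paper's maximum-degree analysis becomes necessary. Your lower-bound construction, by contrast, is essentially the same as the paper's (A1)--(A7), just described from the $Y$ side: in both graphs $Y$ consists of $21$ degree-$2$ vertices realizing each pair of $X$ exactly once and $35$ degree-$1$ vertices, five attached to each $x_i$; the two constructions are isomorphic, though your description makes the verification ($a+b_i=5<6$ for every $x_i$) more transparent than the paper's check that any six neighborhoods cover $51$ of the $56$ vertices.
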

\begin{proof}
Suppose  that $(X=\{x_1,\ldots,x_7\},Y=\{y_1,y_2,\ldots ,y_{56}\})$ are the partition  sets of  $K=K_{7,56}$. Let $G\subseteq K$, such that   $N_G(x_i)=Y_i$, with the following properties:
\begin{itemize}
		 	
	   \item{\bf (A1):} $Y_1=\{y_1,y_2,\ldots, y_{11}\}$.	
			\item{\bf (A2):} $Y_2=\{y_1,y_{12}, y_{11}\ldots, y_{21}\}$.
			\item{\bf (A3):} $Y_3=\{y_2,y_{12}, y_{22},y_{23}\ldots, y_{30}\}$.
			\item{\bf (A4):} $Y_4=\{y_3,y_{13}, y_{22}, y_{31},\ldots, y_{38}\}$.
			\item{\bf (A5):} $Y_5=\{y_4,y_{14}, y_{23}, y_{31}, y_{39}, y_{40},\ldots, y_{45}\}$.
			\item{\bf (A6):} $Y_6=\{y_5,y_{15}, y_{24}, y_{32}, y_{39}, y_{46},y_{47},\ldots, y_{51}\}$.
			\item{\bf (A7):} $Y_7=\{y_6,y_{16}, y_{25}, y_{33}, y_{40}, y_{46},y_{52},\ldots, y_{56}\}$.
		\end{itemize}
		So, for each $i,j\in[6]$, by $(Ai)$ and $(Aj)$, one can  checked that  $|N_G(x_i)\cap N_G(x_j)|=1$,  and $|\cup_{j=1, j\neq i}^{j=7} N_G(x_j)|=51$. Therefore, $K_{2,2}\nsubseteq G$ and  $K_{6,6}\nsubseteq \overline{G}[X\setminus\{x_i\}, Y]$ for each $i\in[7]$. Hence,  $BR_7(K_{2,2}, K_{6,6})\geq 57$. 
		
		Now, suppose that $(X=\{x_1,\ldots, x_7\},Y=\{y_1,\ldots ,y_{57}\})$ are the partition  sets of  $K=K_{7,57}$. Suppose that $G$ be a subgraph of $K$, so that $K_{2,2} \nsubseteq G$. Consider $\Delta=\Delta (G_X)$. One can suppose that $\Delta\in \{9,10,11\}$. Otherwise,  if $\Delta\geq 12$, then theorem holds by Lemma \ref{l1}. Also  for the case that $\Delta\leq 8$, it is clear that $K_{6,6}\subseteq \overline{G}$. Now, we the following claims.
		\begin{claim}\label{c1}
			If	$ \Delta=9$, then  $K_{6,6}\subseteq \overline{G}$.
		\end{claim}	
		\begin{proof}[Proof of Claim \ref{c1}]
			W.l.g let $N_G(x_1)=Y_1=\{y_1,\ldots,y_{9}\}$. As $K_{2,2} \nsubseteq G$,  we have $|N_G(x_i)\cap N_G(x_j)|\leq 1$ for each $i,j\in [7]$. Also, it can be checked that $|N_G(x)\cap Y_1|= 1$ for at least four members of $X\setminus \{x_1\}$, otherwise $K_{6,6}\subseteq \overline{G}[X,Y_1]$. W.l.g  let $|N_G(x)\cap Y_1|= 1$ for each $x_i\in  \{x_2,x_3,x_4,x_5\}$. Therefore, as $ \Delta=9$ and $|N_G(x)\cap Y_1|= 1$ for each $x_i\in  \{x_2,x_3,x_4,x_5\}$, then one can check that  $|\cup_{j=1}^{j=5} N_G(x_j)|\leq 41$. Hence, as $ N_G(x_6)\leq \Delta=9$, then we have $|\cup_{j=1}^{j=6} N_G(x_j)|\leq 50$. Therefore as $|Y|=57$,  we have $K_{6,6}\subseteq \overline{G}[X\setminus\{x_7\}, Y]$,  hence the claim holds.    
		\end{proof}
	\begin{claim}\label{c2}
	If	$ \Delta=10$, then  $K_{6,6}\subseteq \overline{G}$.
\end{claim}	
\begin{proof}[Proof of Claim \ref{c2}]
	W.l.g let $N_G(x_1)=Y_1=\{y_1,\ldots,y_{10}\}$. As $K_{2,2} \nsubseteq G$,  we have $|N_G(x_i)\cap N_G(x_j)|\leq 1$ for each $i,j\in [7]$. Also, it can be checked that $|N_G(x)\cap Y_1|= 1$ for at least five members of $X\setminus \{x_1\}$, otherwise $K_{6,6}\subseteq \overline{G}[X,Y_1]$. W.l.g  let $|N_G(x)\cap Y_1|= 1$ for each $x\in  X'=\{x_2,x_3,x_4,x_5, x_6\}$. Therefore, as $ \Delta=10$ and $|N_G(x)\cap Y_1|= 1$ for each $x\in X'$, then one can check that  $|\cup_{j=1}^{j=5} N_G(x_j)|\leq 55$. If  $|N_G(x)\cap (Y\setminus Y_1)|\leq 8$ for at least four members of  $X'$, then one can check that  $|\cup_{j=1}^{j=6} N_G(x_j)|\leq 51$. Therefore,  as $|Y|=57$,  we have $K_{6,6}\subseteq \overline{G}[X\setminus\{x_7\}, Y]$, that is the claim is true. Now, suppose that $|N_G(x)\cap (Y\setminus Y_1)|= 9$ for at least two members of  $X'$. W.l.g let $|N_G(x)\cap (Y\setminus Y_1)|= 9$ for each $x\in \{x_2,x_3\}$. For $i=2,3$, we may suppose that  $N_G(x_i)\cap (Y\setminus Y_1)=Y_i$. As $|Y_i|=9$, if $|Y_2\cap Y_3|=0$, then it is easy to check that $|N_G(x)\cap Y_i|= 1$ for each $x\in \{x_4,x_5,x_6\}$. Otherwise, one can say that $K_{6,6}\subseteq \overline{G}[X\setminus \{x_i\},Y_i]$ for some $i\in \{2,3\}$. So, we have $|\cup_{j=1}^{j=6} N_G(x_j)|\leq 10+9+9+7+7+7=49$, hence the proof is the same.  So, let   $|Y_2\cap Y_3|=1$. Therefore, we have $|\cup_{j=1}^{j=3} N_G(x_j)|=27$. If $|N_G(x)\cap Y_i|= 1$ for each $x\in \{x_4,x_5,x_6\}$, then the proof is the same.  Now, for each $j=4,5,6$, let   $|N_G(x_j)\cap Y_i|=1$ for at least one $i\in \{2,3\}$.   Therefore, we have $|N_G(x)\cap (Y_1\cup Y_2\cup Y_3)|\geq 2$, that is  $|N_G(x)\cap (Y\setminus Y_1\cup Y_2\cup Y_3)|\leq 8$ for each $x\in \{x_4, x_5,x_6\}$. So, we have $|\cup_{j=1}^{j=6} N_G(x_j)|\leq 10+9+8+8+8+8=51$. Therefore, we have	$K_{6,6}\subseteq \overline{G}[X\setminus\{x_7\}, Y]$. Now,  let   there is a member of  $\{x_4,x_5,x_6\}$ say $x$, such that $|N_G(x)\cap Y_i|= 0$ for each $i=2,3$,  w.l.g let  $x=x_4$. As $|Y_3|=9$, $|Y_2\cap Y_3|=1$, and $|N_G(x_4)\cap Y_i|= 0$, it is easy to check that $|N_G(x)\cap Y_i|= 1$ for each $x\in \{x_5,x_6\}$ and each $i\in \{2,3\}$. Otherwise, one can say that $K_{6,6}\subseteq \overline{G}[X\setminus \{x_i\},Y_i]$ for some $i=2,3$. So, we have $|\cup_{j=1, j\neq 4}^{j=6} N_G(x_j)|\leq 10+9+8+7+7=41$. Therefore, as $\Delta =10$, $|N_G(x_4)\cap Y_1|= 1$,   we have $|\cup_{j=1}^{j=6} N_G(x_j)|\leq 50$, that is	$K_{6,6}\subseteq \overline{G}[X\setminus\{x_7\}, Y]$. Hence claim holds.	
\end{proof}
Therefore, by Claims \ref{c1}, \ref{c2}, let $ \Delta=11$.   W.l.g let  $|N_G(x_1)|=11$ and $Y_1=N_G(x_1)=\{y_1,\ldots,y_{11}\}$. Now, we have the following claim.

	\begin{claim}\label{c3}
If either $|N_G(x_i)\cap Y_1|= 0$ or $N_G(x_i)\cap Y_1 = N_G(x_j)\cap Y_1$ for some $i,j\in\{2,\ldots,7\}$, then $K_{6,6}\subseteq \overline{G}$.
	\end{claim}	
	\begin{proof}[Proof of Claim \ref{c3}]
	 As $K_{2,2} \nsubseteq G$, so $|N_G(x_i)\cap Y_1|\leq 1$ for each $i$.  Now, let  $|N_G(x_2)\cap Y_1|= 0$.  Therefore, it is clear that $K_{6,6}\subseteq \overline{G}[X\setminus\{x_1\},Y_1]$. Also, w.l.g let  $N_G(x_2)\cap N_G(x_3)\cap Y_1=\{y\}$, then as $|X|=7$ and  $|Y_1|=11$, we have $K_{6,6}\subseteq \overline{G}[X\setminus \{x_1\},Y_1\setminus \{y\}]$.	
	\end{proof}	
Therefore, by  Claim \ref{c3}  it is clear  that  $K_{6,5}\subseteq \overline{G}[X\setminus \{x_1\},Y_1]$.  If  $|N_G(x)|=11$ for each $x\in X$, then  by  Claim \ref{c3}  we have  $|\cup_{j=1}^{j=6} N_G(x_j)|= 56$, that is there exists a member of $Y$ say $y_{57}$, such that $|N_G(y_{57})|=0$.   Therefore, we have $K_{6,6}\subseteq \overline{G}[X\setminus \{x_1\},Y_1\cup \{y_{57}\}]$ and the proof is complete.

 So, assume that  $|N_G(x)|\leq 10$ for at least one member of  $X\setminus \{x_1\}$. 	W.l.g let $\deg_G(x_2)=10$ and $ N_G(x_2)=Y_2=\{y_1, y_{12}, y_{13}\ldots,y_{20}\}$. One can suppose that there is at least four members of $X\setminus \{x_1,x_2\}$, say $X'=\{x_3,x_4,x_5,x_6\}$, so that for each $i,j\in \{3,4,5,6\}$, $|N_G(x_i)\cap Y_2\setminus \{y_1\}|= 1$ and $N_G(x_i)\cap Y_2\setminus \{y_1\} \neq N_G(x_j)\cap Y_2\setminus \{y_1\}$. Otherwise, by an argument similar to the proof of  Claim \ref{c3}, we have $K_{6,6}\subseteq \overline{G}[X\setminus \{x_2\},Y_2]$. Therefore, as $ \Delta=11$, and $|N_G(x_i)\cap Y_i\setminus \{y_1\}|= 1$ for each $x\in X'$ and each $i=1,2$, one can check that  $|\cup_{j=1}^{j=6} N_G(x_j)|\leq 11+9+9+9+9+9=56$. Hence, the proof is the same as the case that $|N_G(x)|=11$ for each $x\in X$. So, the theorem holds.
\end{proof}
	In the next results, we determined the size of $BR_8(K_{2,2}, K_{6,6})$.
	\begin{theorem}\label{th3}
		$BR_8(K_{2,2}, K_{6,6})=45$.
	\end{theorem}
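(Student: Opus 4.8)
The plan is to prove $BR_8(K_{2,2},K_{6,6})=45$ by establishing the lower bound $BR_8(K_{2,2},K_{6,6})\geq 45$ via an explicit construction, and the upper bound by showing that every $K_{2,2}$-free subgraph $G$ of $K_{8,45}$ forces $K_{6,6}\subseteq\overline{G}$.

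For the lower bound I would follow the template of Theorem \ref{th2}: exhibit a subgraph $G$ of $K_{8,44}$ with $K_{2,2}\nsubseteq G$ such that for every $x_i\in X$ we have $K_{6,6}\nsubseteq\overline{G}[X\setminus\{x_i\},Y]$. Concretely, I would assign neighbourhoods $N_G(x_i)=Y_i$ so that $|Y_i\cap Y_j|=1$ for all $i\neq j$ (a near-resolvable / projective-plane-like intersection pattern, which automatically guarantees $K_{2,2}\nsubseteq G$) and so that $|\bigcup_{j\neq i}Y_j|$ is too small to contain an all-zero $6\times 6$ block complement on the remaining seven rows; with eight rows of degree $11$ meeting pairwise in one vertex one gets $|\bigcup_j Y_j|=8\cdot 11-\binom{8}{2}=60$, so over $44$ columns the design must be tuned (likely degrees around $11$ with total support $44$) exactly as in the $m=7$ case. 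The bookkeeping here is purely arithmetic once the intersection pattern is fixed.

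The substance is the upper bound. Set $X=\{x_1,\ldots,x_8\}$, $Y=\{y_1,\ldots,y_{45}\}$, $K=K_{8,45}$, and let $G\subseteq K$ with $K_{2,2}\nsubseteq G$, so $|N_G(x_i)\cap N_G(x_j)|\leq 1$ for all $i,j$. Writing $\Delta=\Delta(G_X)$, Lemma \ref{l1} disposes of $\Delta\geq 12$ immediately (the hypotheses $m=8\geq 7$ and $n=45\geq 12$ are met), and $\Delta\leq 8$ is trivial since then $|\bigcup_{j\neq i}N_G(x_j)|\leq 7\cdot 8=56$ is irrelevant—rather, with $m=8$ one deletes \emph{one} row and needs a $6\times6$ zero block among the other seven rows, so the real threshold is smaller. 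The plan is therefore to run case analysis on $\Delta\in\{9,10,11\}$ exactly parallel to Claims \ref{c1} and \ref{c2}: fix a vertex $x_1$ of maximum degree with $N_G(x_1)=Y_1$, observe via the $K_{2,2}$-freeness that each other row meets $Y_1$ in at most one vertex, and count how few columns the union of six (or seven) neighbourhoods can cover. Because we now have \emph{eight} rows rather than seven, after removing the highest-degree vertex there are seven rows available to host the $K_{6,6}$ in $\overline{G}$, which is why $b=45$ suffices where $b=57$ was needed for $m=7$; the counting constant $|Y|=45$ is calibrated so that $|\bigcup_{j\neq i}N_G(x_j)|\leq 44<45$ yields an uncovered column, producing $K_{6,6}\subseteq\overline{G}$.

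I expect the main obstacle to be the $\Delta=11$ (and secondarily $\Delta=10$) case, just as in Theorem \ref{th2}: here one cannot merely bound the union crudely but must argue about the intersection structure of the high-degree neighbourhoods (the subcases $|Y_2\cap Y_3|\in\{0,1\}$, and whether each $x_j$ meets $Y_i$ in one vertex or zero). The delicate point is that with eight rows the adversary has one extra vertex to ``hide'' coverage, so I would need to show that any attempt to keep $\bigcup N_G(x_j)$ large forces either a repeated or empty intersection with $Y_1$ (handing us a zero block by a Claim \ref{c3}-type pigeonhole on $|Y_1|=11$ and seven remaining rows), or else drives the per-row degrees down enough that $|\bigcup_{j}N_G(x_j)|\leq 44$. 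Reusing the Claim \ref{c3} mechanism—if two rows agree on $Y_1$ or one avoids $Y_1$ entirely then a $6\times 6$ zero block appears in $\overline{G}[X\setminus\{x_1\},Y_1]$ on the eight-row side—should let me close the argument, with the final tally $|\bigcup_{j\neq i}N_G(x_j)|\leq 44$ guaranteeing a column of all-zeros and hence $K_{6,6}\subseteq\overline{G}$.
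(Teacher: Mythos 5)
Your upper-bound plan rests on a mis-calibrated counting criterion, and this is a genuine gap. You aim to show $|\bigcup_{j\neq i}N_G(x_j)|\leq 44<45$, so that ``an uncovered column'' appears. But one column missed by seven rows only gives $K_{7,1}\subseteq\overline{G}$; to get $K_{6,6}\subseteq\overline{G}$ you need \emph{six} rows whose neighbourhoods together miss at least \emph{six} columns, i.e.\ six rows with union of size at most $45-6=39$. (This is precisely why the paper's lower-bound construction is tuned so that every six of the eight neighbourhoods cover exactly $39$ of the $44$ columns.) Once the correct threshold $39$ is in place, your assertion that ``$\Delta\leq 8$ is trivial'' collapses: with $\Delta=7$ (resp.\ $\Delta=8$) six rows can cover up to $42$ (resp.\ $48$) columns, so these cases require real work --- the paper devotes a separate paragraph to $\Delta=7$ and all of its Case 1 to $\Delta=8$. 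Restricting the case analysis to $\Delta\in\{9,10,11\}$ therefore silently drops two nontrivial cases. Only $\Delta\leq 6$ is trivial, since then any six rows cover at most $36\leq 39$ columns.

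The second missing idea is the engine the paper uses to reach the threshold $39$ in all of the cases $\Delta\in\{7,8,9,10\}$, namely its Claim \ref{c5}: if some \emph{five} rows cover at most $35$ columns, then $K_{6,6}\subseteq\overline{G}$. The proof of that claim is not a crude union bound: when the five-row union $Y'$ has size $30$--$35$, each of the remaining three rows must have at least $5$ or $6$ neighbours outside $Y'$ (otherwise adjoining it yields a six-row union of size at most $39$ and we are done), and then three rows each having that many neighbours inside the at most $15$ columns of $Y\setminus Y'$ force two rows to share two columns, i.e.\ $K_{2,2}\subseteq G$, a contradiction. This dichotomy (either the union stays below $39$ or $K_{2,2}$-freeness is violated) is what your adaptation of Claims \ref{c1}--\ref{c3} cannot replace, and the paper's Cases 1--3 all funnel into it. Finally, a smaller but real error in your lower bound: eight rows of degree $11$ meeting pairwise in one vertex occupy at least $8\cdot 11-\binom{8}{2}=60$ columns by inclusion--exclusion, so no such design fits in $44$ columns; the correct degree is $9$, giving total support exactly $8\cdot 9-28=44$ and six-row unions $6\cdot 9-15=39$, which is the paper's construction (B1)--(B8).
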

	\begin{proof}
	 Let $(X=\{x_1,\ldots, x_8\},Y=\{y_1,\ldots ,y_{44}\})$ are the partition  sets of  $K=K_{8,44}$. Suppose that $G$ be a subgraph of $K$ such that for each $i\in[8]$,  $N_G(x_i)=Y_i$ with the following properties.
		\begin{itemize}
			
			\item{\bf (B1):} $Y_1=\{y_1,\ldots, y_9\}$.	
			\item{\bf (B2):} $Y_2=\{y_1, y_{10},y_{11},\ldots, y_{17}\}$.
			\item{\bf (B3):} $Y_3=\{y_2,y_{10}, y_{18},y_{19},\ldots, y_{24}\}$.
			\item{\bf (B4):} $Y_4=\{y_3,y_{11}, y_{18}, y_{25},y_{26}, y_{27}, y_{28},y_{29}, y_{30}\}$.
			\item{\bf (B5):} $Y_5=\{y_4,y_{12}, y_{19}, y_{25}, y_{31}, y_{32},y_{33}, y_{34}, y_{35}\}$.
			\item{\bf (B6):} $Y_6=\{y_5,y_{13}, y_{20}, y_{26}, y_{31}, y_{36},y_{37},y_{38},y_{39}\}$.
			\item{\bf (B7):} $Y_7=\{y_6,y_{14}, y_{21}, y_{27}, y_{32}, y_{36},y_{40},y_{41},y_{42}\}$.
			\item{\bf (B8):} $Y_8=\{y_7,y_{15}, y_{22}, y_{28}, y_{33}, y_{37},y_{40},y_{43},y_{44}\}$.
		\end{itemize}
	By considering $(Bi)$ and $(Bj)$, it can be said that:
		\begin{itemize}
			
			\item{\bf (C1):} $|N_G(x_i)\cap N_G(x_j)|=1$, for each $i,j\in [8]$.
			\item{\bf (C2):} $|\cup_{i=1}^{i=6} N_G(x_{j_i})|= 39$, for each $j_1,\ldots,j_6\in [8]$.
		\end{itemize}
		
	Therefore	by $(C1)$,  we have $K_{2,2}\nsubseteq G$. Also, by $(C2)$, one can check that  $K_{6,6}\nsubseteq \overline{G}$, which means that  $K_{8,44} \rightarrow (K_{2,2},K_{6,6})$. Therefore, the lower bound holds.

		Now, we prove the upper bound. We may suppose that  $(X=\{x_1,\ldots,x_8\},Y=\{y_1,\ldots ,y_{45}\})$ are the partition  sets of  $K=K_{8,45}$. Let $G$ be a subgraph of $K$, where $K_{2,2} \nsubseteq G$. We show that $K_{6,6}\subseteq \overline{G}$.  Consider $\Delta=\Delta (G_X)$. As $K_{2,2} \nsubseteq G$, then by Lemma \ref{l1}, one can assume that $\Delta\leq 11$. Hence:
	
		\begin{claim}\label{c4}
		If	$ \Delta= 11$, then $K_{6,6}\subseteq \overline{G}$.
		\end{claim}	
		\begin{proof}[Proof of Claim \ref{c4}]
		 W.l.g let $|N_G(x_1)=Y_1|=11$. Since $K_{2,2} \nsubseteq G$, $|X|=8$, and $|Y_1|=11$, then  one can suppose that $|N_G(x_i)\cap Y_1|= 1$, and  for each $i,j\in\{2,\ldots,8\}$, $x_i$ and $x_j$ have a different neighborhood in $Y_1$. Otherwise, in any case it is clear that $K_{6,6}\subseteq \overline{G}[X,Y_1]$. Therefore,   for each $x\neq x_1$, we have  $K_{6,5}\subseteq \overline{G}[X\setminus \{x_1,x\},Y_1]$. If there is a member of $Y\setminus Y_1$ say $y$, such that $|N_{\overline{G}}(y)\cap (X\setminus\{x_1\})|\geq 6$, then  $K_{6,6}\subseteq \overline{G}[X\setminus \{x_1\}, Y_1\cup \{y\}]$. Hence, let $|N_G(y)\cap (X\setminus\{x_1\}) |\geq 2$ for each $y\in Y\setminus Y_1$. Therefore,  $|E(G[X\setminus \{x_1\}, Y\setminus Y_1])|\geq 34\times 2=68$. Hence, by pigeon-hole principle, there is at least one member of $X\setminus\{x_1\}$ say $x_2$, such that $|N_G(x_2)\cap (Y\setminus Y_1)|\geq 10$. Set $N_G(x_2)\cap (Y\setminus Y_1)=Y_2$. Now, as $K_{2,2} \nsubseteq G$, then $|N_G(x_i)\cap Y_2|\leq 1$ for each  $i\in \{3,\ldots,8\}$. Therefore, since $|Y_2|\geq 10$, one can checked that $K_{6,1}\subseteq \overline{G}[X\setminus\{x_1,x_2\}, Y_2]$. Hence,  as $K_{6,5}\subseteq \overline{G}[X\setminus \{x_1,x_2\},Y_1]$, we have $K_{6,6}\subseteq \overline{G}[X\setminus\{x_1,x_2\}, Y_1\cup Y_2]$. So, the claim holds.
		\end{proof}
Therefore,	by Claim  \ref{c4}, one can assume that $\Delta\leq 10$. Now, we have the next claim.	
	\begin{claim}\label{c5}
		If there exist  $V\subseteq X$, such that $|V|=5$ and $|\cup_{x\in V}N_G(x)|\leq 35$, then we have $K_{6,6}\subseteq \overline{G}$.
	\end{claim}	
	
	\begin{proof}[Proof of Claim \ref{c5}]
		W.l.g assume that  $V=\{x_1,x_2,x_3,x_4,x_5\}$ and let $Y'=\cup_{x\in V}N_G(x)$ where $|Y'|\leq 35$. If  $|Y'|\leq 29$, then  as $\Delta\leq 10$, it is easy to said that $|Y'\cup N_G(x)|\leq 39$ for each $x\in X\setminus V$, that is $K_{6,6}\subseteq \overline{G}[V\cup \{x\}, Y]$, hence the claim holds. So, suppose that $|Y'|\in \{30,\ldots,34,35\}$. Assume that $|Y'|=35$.  Hence,  for each $x\in \{x_6,x_7,x_8\}$, one can assume that  $|N_G(x)\cap Y\setminus Y'|\geq 5$, otherwise we have  $|Y'\cup N_G(x)|\leq 39$ for some $x\in X\setminus V$, that is $K_{6,6}\subseteq \overline{G}$. Therefore, as $|Y\setminus \cup_{i=1}^{i=5} N_G(x_{i})|=10$ and $|\{x_6,x_7,x_8\}|=3$, it easy to say that  $K_{2,2}\subseteq G[\{x_6,x_7,x_8\}, Y\setminus \cup_{i=1}^{i=5} N_G(x_{i})]$, a contradiction.  Now let $|Y'|=34$.  Hence,  for each $x\in \{x_6,x_7,x_8\}$, one can assume that  $|N_G(x)\cap Y\setminus Y'|\geq 6$, otherwise  $|Y'\cup N_G(x)|\leq 39$ for some $x\in X\setminus V$, that is we have $K_{6,6}\subseteq \overline{G}$. Therefore, as $|Y\setminus \cup_{i=1}^{i=5} N_G(x_{i})|=11$ and $|\{x_6,x_7,x_8\}|=3$, it easy to say that  $K_{2,2}\subseteq G[\{x_6,x_7,x_8\}, Y\setminus \cup_{i=1}^{i=5} N_G(x_{i})]$, a contradiction again. For the case that $|Y'|\in \{30,31,32,33\}$, the proof is the same. Hence, the claim holds.
	\end{proof}	
If $\Delta\leq 6$, then it is clear that $K_{6,6}\subseteq \overline{G}$. Now, we may suppose that $\Delta=7$, and	w.l.g let $|N_G(x_1)=Y_1|=7$.   One can assume that $|N_G(x)\cap Y_1|= 1$ for at least three members of $X\setminus  \{x_1\}$. Otherwise, as $|Y_1|=|X\setminus \{x_1\}|=7$ and  $|N_G(x)\cap Y_1|\leq  1$ for  each member of $X\setminus  \{x_1\}$. Then it is easy to say that $K_{6,6}\subseteq \overline{G}[X,Y_1]$. Hence, w.l.g assume that $|N_G(x)\cap Y_1|= 1$ for each members of $\{x_2,x_3,x_4\}$. Hence as $\Delta=7$, one can check that $|\cup_{i=1}^{i=4} N_G(x_{i})|= 25$.  Therefore, we have $|\cup_{i=1}^{i=5} N_G(x_i)|\leq 32$, and by Claim \ref{c5}, we have  $K_{6,6}\subseteq \overline{G}$. So, we may suppose that $\Delta\in \{8,9,10\}$.  Now we consider the following cases.

\bigskip 
\noindent \textbf{Case 1.} $ \Delta=8$.	W.l.g suppose that $\Delta=|N_G(x_1)=Y_1|$. As $K_{2,2} \nsubseteq G$, one can suppose that there exists at least four members of $X\setminus\{x_1\}$ say $X'=\{x_2,\ldots, x_5\}$, such that $|N_G(x_i)\cap Y_1|= 1$ and $N_G(x_i)\cap Y_1\neq N_G(x_j)\cap Y_1$ for each $i,j\in \{2,\ldots,5\}$. Otherwise, one can check that $K_{7,5}\subseteq \overline{G}[X\setminus \{x_1\}, Y_1]$. Therefore, for each $y\in Y\setminus Y_1$, one can suppose that $|N_G(y)\cap X\setminus \{x_1\}|\geq 2$, Otherwise $K_{6,6}\subseteq \overline{G}$. So, as $|Y\setminus Y_1|=37$, we have $|E(G[X\setminus\{x_1\}, Y\setminus Y_1])|\geq 74$. Therefore, by  pigeon-hole principle  there is at least one member of $X\setminus \{x_1\}$ say $x$, such that $|N_G(x)|\geq 10$, a contradiction. Now, w.l.g suppose that $Y_1=\{y_1,\ldots, y_8\}$ and $x_iy_{i-1}\in E(G)$ for each $i=2,3,4,5$.  Set $Y'=\cup_{i=1}^{i=5} N_G(x_{i})$. Hence, it is easy to say that  $|Y'|\leq 36$. If $|Y'|\leq 35$, then proof is complete by Claim \ref{c5}.   So, let  $|Y'|= 36$. That is $\Delta=8=|N_G(x_i)|$  for each $i\in[5]$ and   $|N_G(x_i)\cap N_G(x_j)|=0 $ for each $i,j\in \{2,3,4,5\}$. Now consider $i=2,3$, as $K_{2,2}\nsubseteq G$, we have $|N_G(x_j)\cap  (N_G(x_i)\setminus \{y_{i-1}\})|\leq 1$ for each $j\in \{6,7\}$, hence one can say that $K_{6,6}\subseteq \overline{G}[X\setminus \{x_2,x_3\},  N_G(x_2)\cup N_G(x_2) \setminus \{y_1, y_2\}]$.

\bigskip 
\noindent \textbf{Case 2.} $ \Delta=9$. W.l.g suppose that $ N_G(x_1)=Y_1=\{y_1,y_2,\ldots,y_9\}$. Now, set $A$ as follow:
\[A=\{x\in X, ~~|N_G(x)|=\Delta=9\}\]
As $K_{2,2} \nsubseteq G$, we have $|N_G(x_i)\cap Y_1|\leq 1$. Hence,  one can say that $K_{6,3}\subseteq \overline{G}[X\setminus \{x_1,x\},Y_1]$ for each $x\in \{x_2,\ldots,x_8\}$. Therefore, by considering  the members of $A$, one can check that the following claim is true.
\begin{claim}\label{c6}
	If  $|N_G(x)\cap N_G(x')|=0$ for some  $x,x'\in A$, then $K_{6,6}\subseteq \overline{G}$.
\end{claim}	
Therefore, by  Claims \ref{c6}, we can prove the following claim. 
\begin{claim}\label{c7}
	If  $|N_G(x)\cap N_G(x')\cap N_G(x'')|=1$ for some  $x,x',x'' \in A$, then $K_{6,6}\subseteq \overline{G}$. 
\end{claim}	
\begin{proof}[Proof of Claim \ref{c7}]
	W.l.g assume that $x_1,x_2,x_3\in A$, $\{y_1\}=Y_1\cap Y_2\cap Y_3$, where $Y_i=N_G(x_i)$ for $i=1,2,3$. Since $K_{2,2} \nsubseteq G$,  for each $i\in[3]$ and each $x\in X\setminus\{x_1,x_2,x_3\}$ we have $|N_G(x)\cap Y_i|\leq 1$.  Therefore, as $|Y_i|=9$, and $|N_G(x)\cap Y_i|\leq 1$ for each $x\in X\setminus\{x_1,x_2,x_3\}$, it is easy to say that $K_{5,3}\subseteq \overline{G}[X\setminus \{x_1,x_2,x_3\}, Y_i\setminus \{y_1\}]$ for each $i\in[3]$. Therefore, we have $K_{5,6}\subseteq \overline{G}[X\setminus \{x_1,x_2,x_3\}, Y_1\cup Y_2\setminus \{y_1\}]$. So, as  $y_1\in Y_1\cap Y_2\cap Y_3$ and $K_{2,2} \nsubseteq G$, then  $N_G(x_3)\cap  (Y_1\cup Y_2\setminus \{y_1\})=\emptyset$. Therefore, $K_{6,6}\subseteq \overline{G}[X\setminus \{x_1,x_2\}, Y_1\cup Y_2\setminus \{y_1\}]$. Hence, the claim holds. 
\end{proof}	
Consider $|A|$. First suppose that $|A|\geq 5$, and w.l.g assume that $\{x_1,x_2,x_3,x_4,x_5\}\subseteq A$. Therefore, by Claim \ref{c6} and \ref{c7}, it can be said that     $|\cup_{i=1}^{i=5} N_G(x_{i})|= 35$. Hence, by Claim \ref{c5} the proof is complete.  So, we may assume that $|A|\leq 4$. Now, we verify the following two claims.
\begin{claim}\label{c8}
	If $|A|=4$, then $K_{6,6}\subseteq \overline{G}$. 
\end{claim}	
\begin{proof}[Proof of Claim \ref{c8}]
	W.l.g  assume that $A=\{x_1,x_2,x_3,x_4\}$. Therefore, by Claims \ref{c6} and \ref{c7}, one can checked that   $|\cup_{i=1}^{i=4} N_G(x_{i})|= 30$. Set $Y'=\cup_{i=1}^{i=4} N_G(x_{i})$. If there is a member of $X\setminus A$ say $x$, so that $3\leq |N_G(x)\cap Y'|$, then $|\cup_{i=1}^{i=5} N_G(x_{i})\cup N_G(x)|\leq 35$ and the proof is complete by Claim \ref{c5}.
	
	Hence, we may suppose that  $|N_G(x)\cap Y'|\leq 2$ for each $x\in X\setminus A$. So as $|Y'|=30$, one can checked that  $K_{4, 22}\subseteq \overline{G}[X\setminus A, Y']$. W.l.g let  $K_{4, 22}\cong  \overline{G}[X\setminus A, Y'']$, where $Y''\subseteq Y'$ and $|Y''|=22$. Therefore, it is easy to checked that there is at least two members of $A$ say $x_{i_1} x_{i_2}$, such that $|(N_G(x_{i_1})\cup N_G(x_{i_1}))\cap Y''|\leq 16$. W.l.g let $i_1=1, i_2=2$. So, we have  $K_{6, 6}\cong  \overline{G}[X\setminus \{x_3,x_4\}, Y'']$. Hence, the claim holds. 
\end{proof}
\begin{claim}\label{c9}
	If $|A|=3$, then $K_{6,6}\subseteq \overline{G}$. 
\end{claim}	
\begin{proof}[Proof of Claim \ref{c9}]
	W.l.g  suppose that $ A=\{x_1,x_2,x_3\}$. Therefore, by Claims \ref{c6} and \ref{c7}, it can be said that    $|\cup_{i=1}^{i=3} N_G(x_{i})|= 24$. Set $Y'=\cup_{i=1}^{i=3} N_G(x_{i})$. Suppose that there exists a vertex of $X\setminus A$ say $x$, such that $|N_G(x)\cap Y'|\geq 2$, then we have $|\cup_{i=1}^{i=4} N_G(x_{i})\cup N_G(x)|\leq 30$. W.l.g assume that $x=x_4$.  If  $|\cup_{i=1}^{i=4} N_G(x_{i})|\leq 27$, then the proof is complete by Claim \ref{c5}.  So, suppose that  $28\leq |\cup_{i=1}^{i=4} N_G(x_{i})|\leq 30$. Let $|\cup_{i=1}^{i=4} N_G(x_{i})|= 30$. Set $X'=\{x_5,x_6,x_7,x_8\}$. In this case, one can suppose that  $|N_G(x)\cap (Y\setminus \cup_{i=1}^{i=4} N_G(x_{i}))|\geq 6$ for each $x\in X'$. Otherwise, the proof is complete by Claim \ref{c5}. Therefore, as $|X'|=4$ and $|Y\setminus \cup_{i=1}^{i=4} N_G(x_{i}))|= 15$, then one can checked that $K_{2,2}\subseteq G$, a contradiction. For the case that  $|\cup_{i=1}^{i=4} N_G(x_{i})|=28, 29$, the proof is the same. 
	
	 So, let  $|N_G(x)\cap Y'|\leq 1$ for each $x\in X\setminus A$. Therefore, it is clear that  $K_{5, 19}\subseteq \overline{G}[X\setminus A, Y']$. W.l.g let  $K_{5, 19}\cong  \overline{G}[X\setminus A, Y'']$, where $Y''\subseteq Y'$ and $|Y''|=19$. Therefore, one can said  that there is at least one member of $A$ say $y$, so that $|N_G(y)\cap Y''|\leq 9$. W.l.g let $y=y_1$. So  $K_{6, 6}\cong  \overline{G}[X\setminus \{x_2, x_3\}, Y'']$. Hence, the claim holds. 
\end{proof}

Hence, by Claims \ref{c8} and \ref{c9}, one can suppose that $|A|\leq 2$.  First, assume  that $|A|=2$ and w.l.g suppose that $A=\{x_1,x_2\}$. By Claim \ref{c7}, we have $|\cup_{i=1}^{i=2} N_G(x_{i})|= 17$. For $i=1,2$, set $Y_i=N_G(x_i)$. By Claim \ref{c7} w.l.g let $y_1\in Y_1\cap Y_2$. Set $X'=X\setminus A$. Suppose that  there is at least two vertices of $X'$ say $x_3,x_4$,  such that $|N_G(x)\cap Y_i\setminus \{y_1\}|=0$ for at least one $i\in[2]$. W.l.g let  $|N_G(x)\cap Y_1\setminus \{y_1\}|=0$, therefore as $|N_G(x)\cap Y_1\setminus \{y_1\}|\leq 1$ and $|Y_1|=9$, one can say that  $K_{6, 4}\subseteq \overline{G}[X\setminus A, Y_1\setminus \{y_1\}]$. Also, one can check that  $K_{6, 2}\subseteq \overline{G}[X\setminus A, Y_2\setminus \{y_1\}]$, hence $K_{6,6}\subseteq \overline{G}[X\setminus A, Y_1\cup Y_2\setminus \{y_1\}]$. Therefore for any $i\in \{1,2\}$, we may suppose that $|N_G(x)\cap Y_i\setminus \{y_1\}|=1$  for at least five members of $X'$. Hence as $|X'|=6$, it is clear that there is at least three member of $X'$ say $\{x_3,x_4,x_5\}$, so that for any $x\in \{x_3,x_4,x_5\}$, we have $|N_G(x)\cap( Y_1 \cup Y_2\setminus \{y_1\})|= 2$.  Therefore, as $|N_G(x)|\leq 8$ for each $i=3,4,5$, one can check that   $|\cup_{i=1}^{i=5} N_G(x_{i})|\leq 17+18=35$.  Hence, the proof is complete by Claim \ref{c5}.

Now, let  $|A|=1$ and w.l.g let $A=\{x_1\}$. In this case, one can say that there exist at least five vertices of $X\setminus \{x_1\}$ say $X''=\{x_2,x_3,x_4,x_5, x_6\}$, such that $|N_G(x)\cap Y_1|= 1$ for each $x\in X''$. Otherwise, as $|Y_1|=9$ and $|N_G(x)\cap Y_1|\leq 1$ for each $x\in X\setminus\{x_1\}$, then one can say that  $K_{6, 6}\cong  \overline{G}[X\setminus \{x_1\}, Y_1]$. Therefore, there is at least one vertex of $X''$ say $x_2$, so that $|N_G(x)|=8$. Otherwise, we have $|\cup_{i=2}^{i=6} N_G(x_{i})|\leq 35$ and the proof is complete by Claim\ref{c5}. W.l.g assume  that $Y_2=N_G(x_2)\cap Y\setminus Y_1$ and $|Y_2|=7$. Therefore, one can say that there is at least two vertices of $X''\setminus \{x_2\}$ say $\{x_3,x_4\}$, so that  for each $x\in\{x_3,x_4\}$, we have $|N_G(x)\cap Y_2|= 1$, otherwise as $|Y_2|=7$ and $|N_G(x)\cap Y_1|\leq 1$ for each $x\in X\setminus\{x_1\}$, then one can say that  $K_{6, 6}\cong  \overline{G}[X\setminus \{x_2\}, Y_2]$. Now, one can check that $|\cup_{i=1}^{i=5} N_G(x_{i})|\leq 9+7+6+6+7=35$, and the proof is complete by Claim\ref{c5}. 
	
\bigskip 
\noindent \textbf{Case 3.} $ \Delta=10$. W.l.g let $N_G(x_1)=Y_1=\{y_1,\ldots,y_{10}\}$. Therefore by $K_{2,2} \nsubseteq G$ it is clear to say that $K_{6,4}\subseteq \overline{G}[X\setminus \{x_1,x\},Y_1]$ for each $x\in X\setminus\{x_1\}$.  Let there is a member of $ X\setminus\{x_1\}$ say $x_2$, so that $| N_G(x_2)\cap (Y\setminus Y_1)=Y_2|=8$.  Therefore, as  $K_{2,2} \nsubseteq G$, we have $|N_G(x_i)\cap Y_2|\leq 1$. Hence, since $|Y_2|=8$ and $|X\setminus \{x_1,x_2\}|=2$, one can say that $K_{6,2}\subseteq \overline{G}[X\setminus \{x_1,x_2\},  Y_2]$. So, $K_{6,6}\subseteq \overline{G}[X\setminus \{x_1,x_2\},  Y_1\cup Y_2]$. Now, one can suppose that $| N_G(x)\cap (Y\setminus Y_1)|\leq 7$ for any member of $X\setminus \{x_1\}$. Hence:
\begin{claim}\label{c10}
	Suppose that	$| Y'=N_G(x)\cap (Y\setminus Y_1)|= 7$. If either $| N_G(x')\cap Y'|= 0$ for one $x'\in X\setminus \{x_1,x\}$, or $| N_G(x')\cap N_G(x'')\cap Y'|= 1$ for some $x',x''\in X\setminus \{x_1,x\}$,  then $K_{6,6}\subseteq \overline{G}$.
\end{claim}	
\begin{proof}[Proof of Claim \ref{c10}]
	W.l.g let 	$| Y'=N_G(x_2)\cap (Y\setminus Y_1)|= 7$.  Also, w.l.g let $| N_G(x_3)\cap Y'|= 0$. Therefore,  Since $K_{2,2} \nsubseteq G$, so $|N_G(x_i)\cap (Y_1\cup Y')|\leq 2$ for each $i\in \{3,4,5,6,7,8\}$. As $|Y_1\cup Y'|=17$, $| N_G(x_3)\cap Y'|= 0$, and $|N_G(x_i)\cap Y_1\cup Y'|\leq 2$, one can say that $| \cup_{i=3}^{i=8}( N_G(x_i)\cap Y_1\cup Y')|\leq 11$, which means that $K_{6,6}\subseteq \overline{G}[X\setminus \{x_1,x_2\},Y_1\cup Y_2]$. For the case that $| N_G(x')\cap N_G(x'')\cap Y'|= 1$ for some $x',x''\in X\setminus \{x_1,x\}$, the proof is the same. Hence, the claim holds.
\end{proof}	
Set $M$ as follow:
\[M= \{x\in X\setminus \{x_1\}, ~~| N_G(x)\cap (Y\setminus Y_1)|= 7\}.\]
By considering $M$, we have:
\begin{claim}\label{c11}
	If $|M|\neq 0$, then we have $K_{6,6}\subseteq \overline{G}$.
\end{claim}	
\begin{proof}[Proof of Claim \ref{c11}]
	W.l.g let $x_2\in M$, and $N_G(x_2)\cap (Y\setminus Y_1)=Y_2=\{y_{11}, y_{12} \ldots, y_{17}\}$.	If $|M|\geq 5$, then by Claim \ref{c10}, it can be said that $|\cup_{x_j\in M'} N_G(x_j)|\leq 35$, where $M'\subseteq M$ and $|M'|=5$. Hence, the proof is complete by Claim \ref{c5}. Now, assume that $|M|=i$,  and w.l.g suppose that $M=\{x_2,x_3,\ldots, x_{i+1}\}$, where $i\in \{1,2,3,4\}$. Also if 	 $|M|\leq 2$, then it can be said that $|\cup_{x_j\in M''} N_G(x_j)|\leq 35$, where $M''\subseteq X\setminus M$ and $|M''|=5$. Hence, the proof is complete by Claim \ref{c5}. Now assume that $|M|=i$, where $i\in \{3,4\}$.
	
	By Claims \ref{c10}, for the case that $i=4$, we have $|\cup_{j=1}^{j=5} N_G(x_j)|= 10+7+6+5+4=32$. Hence, the proof is complete by Claim \ref{c5}. Also for the case that $i=3$, by Claims \ref{c10},  we have $|\cup_{j=1}^{j=4} N_G(x_j)|= 10+7+6+5=28$. Therefore, we have  $|\cup_{j=1}^{j=5} N_G(x_j)|\leq 34$.  Hence, the proof is complete by Claim \ref{c5}.
	 
\end{proof}	
Now, by Claim \ref{c11}, let $|M|=0$, that is $| N_G(x)\cap (Y\setminus Y_1)|\leq  6$ for each $x\in X\setminus\{x_1\}$. In this case, the proof is complete by Claim \ref{c5}.

Hence, by Cases 1, 2, 3, the upper bound holds. 	Thus,  $BR_8(K_{2,2}, K_{6,6}) =45$ and theorem holds.
\end{proof}

	\begin{proof}[\bf Proof of Theorem \ref{M.th}]
	By combining Theorems  \ref{th1}, \ref{th2},  and \ref{th3},  Theorem \ref{M.th} is holds.
	\end{proof}
 
	\bibliographystyle{spmpsci} 
	\bibliography{BI}
\end{document}